\newtheorem{theorem}{Theorem}[section]
\newtheorem{proposition}[theorem]{Proposition}
\newtheorem{definition}[theorem]{Definition}
\newcommand{\tr}{{\rm Tr\hskip -0.2em}~}
\begin{document}

\title{Regular operator mappings\\
and\\
multivariate geometric means}
\author{Frank Hansen}
\date{}

\maketitle

\begin{abstract}
We introduce the notion of regular operator mappings of several variables generalising the notion of spectral function. This setting is convenient for studying maps more general than what can be obtained from the functional calculus, and it allows for Jensen type inequalities and multivariate non-commutative perspectives.

As a main application of the theory we consider geometric means of $ k $ operator variables extending the geometric mean of $ k $ commuting operators and the geometric mean of two arbitrary positive definite matrices. We propose different types of updating conditions that seems natural in many applications and prove that each of these conditions, together with a few other natural axioms, uniquely defines the geometric mean for any number of operator variables. The means defined in this way are given by explicit formulas and are computationally tractable.\footnote{An earlier version of this paper was posted in the ArXiv on March 15, 2014 with the title Geometric means of several variables.}
\\[1ex]
{\bf MSC2010} classification: 26B25; 47A64\\[1ex]
{\bf{Key words and phrases:}}  regular operator mapping; perspective; multivariate geometric mean.

\end{abstract}

\section{Introduction}

The geometric mean of two positive definite operators was introduced by Pusz and Woronowicz  \cite{kn:pusz:1975}, and their definition was soon put into the context of the axiomatic approach to operator means developed by Kubo and Ando \cite{kn:kubo:1980}. Subsequently a number of authors \cite{kn:kosaki:1984, kn:ando:2004:1, kn:moakher:2005, kn:bhatia:2006, kn:Lim:2012, kn:Lawson:2014} have suggested several ways of defining means of operators for several variables as extensions of the geometric mean of two operators.

There is no satisfactory definition of a geometric mean of several operator variables that is both computationally tractable and satisfies a number of natural conditions put forward in the influential paper by Ando, Li, and Mathias  \cite{kn:ando:2004:1}. We put the emphasis on methods to extend a geometric mean of $ k $ variables to a mean of $ k+1 $ variables, and in the process we challenge one of the requirements to a geometric mean put forward by Ando, Li, and Mathias.

The symmetry condition of a geometric mean is mathematically very appealing, but the condition makes no sense in a number of applications. If for example positive definite matrices $ A_1,A_2,\dots,A_k $ correspond to measurements made at times $ t_1<t_2<\cdots<t_k $ then there is no way of permuting the matrices since time only goes forward. It makes more sense to impose an updating condition
\begin{equation}\label{updating condition}
G_{k+1}(A_1,\dots,A_k,1)=G_k(A_1,\dots,A_k)^{k/(k+1)}
\end{equation}
when moving from a mean $ G_k $ of $ k $ variables to a mean $ G_{k+1} $ of $ k+1 $ variables. The condition corresponds to taking the geometric mean of $ k $ copies of $ G_k(A_1,\dots,A_k) $ and one copy of the unit matrix. A variant condition would be to impose the equality
\begin{equation}\label{variant updating condition}
G_{k+1}(A_1,\dots,A_k,1)=G_k(A_1^{k/(k+1)},\dots,A_k^{k/(k+1)})
\end{equation}
when updating from $ k $ to $ k+1 $ variables.
It is an easy exercise to realise that if we set $ G_1(A)=A, $ then either of the conditions (\ref{updating condition}) or (\ref{variant updating condition}) together with homogeneity uniquely  defines the geometric mean of $ k $ commuting operators.  

We furthermore prove that by setting $ G_1(A)=A $ and by demanding homogeneity and a few more natural conditions, then either of the updating conditions (\ref{updating condition}) or (\ref{variant updating condition}) leads to unique but different solutions to the problem of defining a geometric mean of $ k $ operators. 
The means defined in this way are given by explicit formulas, and they are computationally tractable. They possess all of the attractive properties associated with geometric operator means discussed in \cite{kn:ando:2004:1} with the notable exception of symmetry. 
If one emphasises either of the updating conditions (\ref{updating condition}) or (\ref{variant updating condition}) we are thus forced to abandon symmetry.

Efficient averaging techniques of positive definite matrices are important in many practical applications; for example in radar imaging, medical imaging, and the analysis of financial data.

\section{Regular operator mappings}

\subsection{Spectral functions}

Let $ B(\mathcal H) $ denote the set of bounded linear operators on a Hilbert space $ \mathcal H. $ A function $ F\colon\mathcal D\to B(\mathcal H) $ defined in a convex domain $ \mathcal D $ of self-adjoint operators in $ B(\mathcal H) $ is called a spectral function, if  it can be written on the form $ F(x)=f(x) $ for some function $ f $ defined in a real interval $ I, $ where $ f(x) $ is obtained by applying the functional calculus.

The definition contains some hidden assumptions. The domain $ \mathcal D $ should be invariant under unitary transformations and
\begin{equation}\label{unitary invariance}
F(u^*xu)=u^*F(x)u\qquad x\in\mathcal D
\end{equation}
for every unitary transformation $ u $ on $ \mathcal H. $ Furthermore, to pairs of mutually orthogonal projections $ p $ and $ q $ acting on $ \mathcal H, $ the element $ pxp+qxq $ should be in $ \mathcal D $  and the equality
\begin{equation}\label{rule for block matrices}
F(pxp+qxq)=pF(pxp)p+qF(qxq)q
\end{equation}
should hold for any $ x\in B(\mathcal H) $ such that $ pxp $ and $ qxq $ are in $ \mathcal D. $ An operator function is a spectral function if and only if (\ref{unitary invariance}) and (\ref{rule for block matrices}) are satisfied, cf. \cite{kn:davis:1957, kn:hansen:2003:2}.

The notion of spectral function is not immediately extendable to functions of several variables. However, we may consider the two properties of spectral functions noticed by C. Davis as a kind of regularity conditions, and they are readily extendable to functions of more than one variable. 

The notion of a regular map of two operator variables were studied by Effros and the author in \cite{kn:hansen:2014:2}, cf. also \cite{kn:hansen:1983}.

\begin{definition}\label{definition: regular map}
Let $ F\colon\mathcal D\to B(\mathcal H) $ be a mapping of $ k $ variables defined in a convex domain $ \mathcal D\subseteq B(\mathcal H)\times\cdots\times B(\mathcal H). $
We say that $ F $ is regular if

\begin{enumerate}[(i)]

\item The domain $ \mathcal D $ is invariant under unitary transformations of $ \mathcal H $ and
\[
F(u^*x_1 u,\dots, u^*x_ku)=u^* F(x_1,\dots,x_k) u
\]
for every $ x=(x_1,\dots,x_k)\in\mathcal D $ and every unitary $ u $ on $ \mathcal H. $

\item Let $ p $ and $ q $ be mutually orthogonal projections acting on $\mathcal H $ and take arbitrary $ k $-tuples
$ (x_1,\dots,x_k) $ and $ (y_1,\dots,y_k) $ of operators in $ B(\mathcal H) $ such that the compressed tuples
\[
(p x_1 p,\dots,p x_k p)\qquad\text{and}\qquad (q y_1 q,\dots,q y_k q)
\]
are in the domain $ \mathcal D. $ Then the $ k $-tuple of diagonal block matrices
\[
(px_1p+qy_1q,\dots, p x_k p+q y_k q)
\]
is also in the domain $ \mathcal D $ and 
\[
\begin{array}{l}
F(px_1p+qy_1q,\dots, p x_k p+q y_k q)\\[2ex]
\qquad=pF(px_1p,\dots,px_kp)p+qF(qy_1q,\dots,qy_kq)q.
\end{array}
\]
\end{enumerate}                
\end{definition}
By choosing $ q $ as the zero projection in the second condition in the above definition we 
obtain
\[
F(px_1p,\dots, p x_k p)=pF(px_1p,\dots,px_kp)p,
\]
which shows that $ F $ for any orthogonal projection $ p $ on $ \mathcal H $ may be considered as a regular operator mapping
\[
F\colon \mathcal D_p\to B(p\mathcal H),
\]
where the compressed domain
\[
\displaystyle\mathcal D_p=\{(x_1,\dots,x_k)\in \bigoplus_{m=1}^k B(p\mathcal H)\mid 
\bigl(x_1\oplus 0(1-p),\dots,x_k\oplus 0(1-p)\bigr)\in\mathcal D\}.
\] 
With this interpretation we may unambiguously calculate block matrices by the formula
\[
F\left(\begin{pmatrix}
                    x_1 & 0\\
                    0     & y_1
                    \end{pmatrix},
\dots,
\begin{pmatrix}
                    x_k & 0\\
                    0     & y_k
                    \end{pmatrix}\right)
=\begin{pmatrix}
                    F(x_1,\dots,x_k) & 0\\
                    0                         & F(y_1,\dots,y_k)
                    \end{pmatrix}
\]
which is well-known from mappings generated by the functional calculus.

\subsection{Jensen's inequality for regular operator mappings}

We consider throughout this paper the domain 
\[
\mathcal D^k=\{(A_1,\dots,A_k)\mid A_1,\dots,A_k\ge 0\}
\]
of $ k $-tuples of positive semi-definite operators acting on an infinite dimensional Hilbert space $ \mathcal H. $ It is convenient to consider an infinite dimensional Hilbert space since in this case $ \mathcal H $ is isomorphic to $ \mathcal H\oplus\mathcal H $ which allows us to use block matrix techniques without imposing dimension conditions.

\begin{theorem}
Consider a convex regular mapping
\[
F\colon\mathcal D^k\to B(\mathcal H)_\text{sa}
\]
of $ \mathcal D^k $ into self-adjoint operators acting on $ \mathcal H. $
\begin{enumerate}[(i)]

\item Let $ C $ be a contraction on $ \mathcal H. $ If $ F(0,\dots,0)\le 0 $ then the inequality
\[
F(C^*A_1C,\dots,C^*A_kC)\le C^* F(A_1,\dots,A_k)C
\]
holds for $ k $-tuples $ (A_1,\dots,A_k) $ in $ \mathcal D^k. $

\item Let $ X $ and $ Y $ be operators acting on $ \mathcal H $ with $ X^*X+Y^*Y=1. $ Then the inequality
\[
\begin{array}{l}
F\bigl(X^*A_1X+Y^*B_1Y,\dots,X^*A_kX+Y^*B_kY\Bigr)\\[2ex]
\hskip 6em\le X^* F(A_1,\dots,A_k)X +  Y^* F(B_1,\dots,B_k)Y
\end{array}
\]
holds for $ k $-tuples $ (A_1,\dots,A_k) $ and $ (B_1,\dots,B_k) $ in $ \mathcal D^k. $

\end{enumerate}
\end{theorem}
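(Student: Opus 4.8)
The plan is to derive from the hypotheses a single ``master inequality'' for an arbitrary contraction and then read off both (i) and (ii). I would first establish that for every contraction $ C $ on $ \mathcal H $ and every $ (A_1,\dots,A_k)\in\mathcal D^k $,
\[
F(C^*A_1C,\dots,C^*A_kC)\le C^*F(A_1,\dots,A_k)C+(1-C^*C)^{1/2}F(0,\dots,0)(1-C^*C)^{1/2}.
\]
Granting this, (i) is immediate: if $ F(0,\dots,0)\le 0 $ the last summand is negative semidefinite and may be discarded. For (ii), note that the master inequality holds equally well for $ F $ regarded as a convex regular mapping of tuples of positive operators on $ \mathcal H\oplus\mathcal H $, since $ \mathcal H\cong\mathcal H\oplus\mathcal H $; I would apply it there to the contraction $ C=\left(\begin{smallmatrix}X&0\\Y&0\end{smallmatrix}\right) $, for which $ C^*C=1\oplus 0 $, and to the tuple $ A_i\oplus B_i $. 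Then $ C^*(A_i\oplus B_i)C=(X^*A_iX+Y^*B_iY)\oplus 0 $, and the block rule of Definition~\ref{definition: regular map} gives $ F(A_1\oplus B_1,\dots,A_k\oplus B_k)=F(A_1,\dots,A_k)\oplus F(B_1,\dots,B_k) $, hence $ C^*F(A_1\oplus B_1,\dots,A_k\oplus B_k)C=\bigl(X^*F(A_1,\dots,A_k)X+Y^*F(B_1,\dots,B_k)Y\bigr)\oplus 0 $; finally $ (1-C^*C)^{1/2}=0\oplus 1 $, so the correction term has vanishing upper-left corner. Comparing upper-left corners of the master inequality, using the block rule once more to identify the upper-left corner of the left-hand side with $ F(X^*A_1X+Y^*B_1Y,\dots,X^*A_kX+Y^*B_kY) $, gives exactly (ii); note that no hypothesis on $ F(0,\dots,0) $ is needed here.

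To prove the master inequality I would use a two-unitary dilation. Put $ D=(1-C^*C)^{1/2} $ and $ E=(1-CC^*)^{1/2} $; from $ C(1-C^*C)=(1-CC^*)C $ and the functional calculus one gets the intertwining identities $ CD=EC $ and $ DC^*=C^*E $, which make
\[
U_\pm=\begin{pmatrix} C & \pm E\\ \mp D & C^*\end{pmatrix}
\]
unitary operators on $ \mathcal H\oplus\mathcal H $. Viewing $ F $ as a convex regular mapping on $ \mathcal H\oplus\mathcal H $ and writing $ \hat A_i=A_i\oplus 0 $, direct $ 2\times2 $ block computations give
\[
\tfrac12\bigl(U_+^*\hat A_iU_++U_-^*\hat A_iU_-\bigr)=(C^*A_iC)\oplus(EA_iE),
\]
and, since the block rule yields $ F(\hat A_1,\dots,\hat A_k)=F(A_1,\dots,A_k)\oplus F(0,\dots,0) $, the upper-left block of $ \tfrac12\bigl(U_+^*F(\hat A_1,\dots,\hat A_k)U_++U_-^*F(\hat A_1,\dots,\hat A_k)U_-\bigr) $ equals $ C^*F(A_1,\dots,A_k)C+DF(0,\dots,0)D $. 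Now apply convexity of $ F $ at the midpoint of $ \bigl(U_+^*\hat A_1U_+,\dots,U_+^*\hat A_kU_+\bigr) $ and $ \bigl(U_-^*\hat A_1U_-,\dots,U_-^*\hat A_kU_-\bigr) $, use unitary invariance on the right so that $ F(U_\pm^*\hat A_1U_\pm,\dots)=U_\pm^*F(\hat A_1,\dots,\hat A_k)U_\pm $, use the block rule on the left, and compare upper-left corners; this is precisely the master inequality.

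The one genuinely essential point is the use of the pair $ U_\pm $ rather than a single dilation: the operator $ C\oplus C^* $, which is what conjugates $ \hat A_i $ into $ (C^*A_iC)\oplus(EA_iE) $, is not unitary unless $ C $ is and so cannot be fed into the unitary-invariance axiom directly, whereas the decomposition $ C\oplus C^*=\tfrac12(U_++U_-) $ into genuine unitaries is exactly what lets convexity do the work. Everything else is routine: the $ 2\times2 $ block bookkeeping above, and the trivial remark that every operator produced along the way ($ C^*A_iC $, $ EA_iE $, $ A_i\oplus 0 $, $ A_i\oplus B_i $) is positive semidefinite, hence automatically in the appropriate copy of $ \mathcal D^k $. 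I anticipate no obstacle beyond keeping these computations organised.
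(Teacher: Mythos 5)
Your proof is correct and follows essentially the same route as the paper: the same two-unitary dilation $U_\pm$ (the paper's $U,V$ differ from yours only by a sign in the lower-right entry) combined with convexity, unitary invariance and the block rule for (i), and the same contraction $\bigl(\begin{smallmatrix}X&0\\Y&0\end{smallmatrix}\bigr)$ acting on $\mathcal H\oplus\mathcal H$ for (ii). The only organisational difference is that you carry the correction term $(1-C^*C)^{1/2}F(0,\dots,0)(1-C^*C)^{1/2}$ explicitly in a master inequality and observe that its upper-left corner vanishes in (ii), whereas the paper first passes to $G=F-F(0,\dots,0)$ (using that $F(0,\dots,0)$ is a scalar multiple of the identity by unitary invariance) and then invokes (i); both handle the constant term equally validly.
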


\begin{proof}
By setting $ T=(1-C^*C)^{1/2} $ and $ S=(1-CC^*)^{1/2} $ we obtain that the block matrices
\[
U=\begin{pmatrix}
                C & S\\
                T & -C^*
                \end{pmatrix}
\qquad\text{and}\qquad
V=\begin{pmatrix}
                C  & -S\\
                -T & -C^*
                \end{pmatrix}
\]
are unitary operators on $ \mathcal H\oplus\mathcal H. $ Furthermore,
\[
\frac{1}{2}U^*\begin{pmatrix}
                      A & 0\\
                      0 & 0
                      \end{pmatrix}U
+\frac{1}{2}V^*\begin{pmatrix}
                      A & 0\\
                      0 & 0
                      \end{pmatrix}V
=\begin{pmatrix}
             C^*AC & 0\\
             0          & SAS
             \end{pmatrix}
\]
for any operator $ A\in B(\mathcal H). $ By using that $ F $ is a convex regular map we obtain
\[
\begin{array}{l}
\displaystyle\begin{pmatrix}
          F(C^*A_1C,\dots,C^*A_kC) & 0\\
          0                                           & F(S A_1 S,\dots,S A_k S)
          \end{pmatrix}\\[3ex]
\displaystyle =F\left(\begin{pmatrix}
                                C^*A_1C & 0\\
                                0              & S A_1 S
                                \end{pmatrix},\dots,\begin{pmatrix}
                                                                         C^*A_kC & 0\\
                                                                         0              & S A_k S
                                                                         \end{pmatrix}\right)\\[3ex]
=F\left(            
\frac{1}{2}U^*\begin{pmatrix}
                      A_1 & 0\\
                      0 & 0
                      \end{pmatrix}U
+\frac{1}{2}V^*\begin{pmatrix}
                      A_1 & 0\\
                      0 & 0
                      \end{pmatrix}V,\dots,
\frac{1}{2}U^*\begin{pmatrix}
                      A_k & 0\\
                      0 & 0
                      \end{pmatrix}U
+\frac{1}{2}V^*\begin{pmatrix}
                      A_k & 0\\
                      0 & 0
                      \end{pmatrix}V

\right)\\[3ex]
\displaystyle\le \frac{1}{2} F\left(            
U^*\begin{pmatrix}
                 A_1 & 0\\
                 0 & 0
                 \end{pmatrix}U
,\dots,
U^*\begin{pmatrix}
                A_k & 0\\
                0 & 0
                \end{pmatrix}U\right)
+\frac{1}{2} F\left(            
V^*\begin{pmatrix}
                 A_1 & 0\\
                 0 & 0
                 \end{pmatrix}V
,\dots,
V^*\begin{pmatrix}
                A_k & 0\\
                0 & 0
                \end{pmatrix}V\right)\\[3ex]
=\displaystyle\frac{1}{2}U^*F\left(\begin{pmatrix}
                                 A_1 & 0\\
                                 0 & 0
                                 \end{pmatrix}
,\dots,
\begin{pmatrix}
                A_k & 0\\
                0 & 0
                \end{pmatrix}\right)U
+\frac{1}{2}V^*F\left(\begin{pmatrix}
                                 A_1 & 0\\
                                 0 & 0
                                 \end{pmatrix}
,\dots,
\begin{pmatrix}
                A_k & 0\\
                0 & 0
                \end{pmatrix}\right)V\\[3ex]
\displaystyle=\frac{1}{2}U^*\begin{pmatrix}
                                                     F(A_1,\dots,A_k) & 0\\
                                                     0                          & F(0,\dots,0)
                                                     \end{pmatrix}U
+\frac{1}{2}V^*\begin{pmatrix}
                                                     F(A_1,\dots,A_k) & 0\\
                                                     0                          & F(0,\dots,0)
                                                     \end{pmatrix}V\\[3ex]
\displaystyle\le\frac{1}{2}U^*\begin{pmatrix}
                                                     F(A_1,\dots,A_k) & 0\\
                                                     0                          & 0
                                                     \end{pmatrix}U
+\frac{1}{2}V^*\begin{pmatrix}
                                                     F(A_1,\dots,A_k) & 0\\
                                                     0                          & 0
                                                     \end{pmatrix}V\\[3ex]                            
\displaystyle=\begin{pmatrix}
                                C^*F(A_1,\dots,A_k)C & 0\\
                                0                                  & S F(A_1,\dots,A_k) S
                                \end{pmatrix},
\end{array}
\]
where we used convexity in the first inequality, and in the second inequality used $ F(0,\dots,0)\le 0. $ The first statement now follows.

In order to prove $ (ii) $ we define the map
\[
G(A_1,\dots, A_k)=F(A_1,\dots,A_k)-F(0,\dots,0)\qquad (A_1,\dots,A_k)\in\mathcal D^k.
\]
Unitary invariance of $ F $ implies that $ F(0,\dots,0) $ is a multiple of the unit operator and thus commutes with all projections. Therefore $ G $ is regular and convex with  $ G(0,\dots,0)=0. $ We then define block matrices
\[
C=\begin{pmatrix}
     X & 0\\
     Y & 0
     \end{pmatrix}
\qquad\text{and}\qquad
Z_m=\begin{pmatrix}
                  A_m & 0\\
                  0    & B_m
                  \end{pmatrix},\quad m=1,\dots,k
\]
and notice that
\[
C^*Z_mC=\begin{pmatrix}
                            X^*A_mX+Y^* B_m Y & 0\\
                            0                                  & 0
                            \end{pmatrix}
\]
for $ m=1,\dots,k. $ Finally we use $ (i) $ to obtain 
\[
\begin{array}{l}
\displaystyle
\begin{pmatrix}
G\bigl(X^*A_1X+Y^*B_1Y,\dots,X^*A_kX+Y^*B_kY\Bigr) & 0\\
0                                                                                          & 0
\end{pmatrix}\\[5ex]
\displaystyle
=\begin{pmatrix}
G\bigl(X^*A_1X+Y^*B_1Y,\dots,X^*A_kX+Y^*B_kY\Bigr) & 0\\
0                                                                                          & G(0,\dots,0)
\end{pmatrix}\\[4ex]
=G\bigl(C^*Z_1C,\dots,C^* Z_k C\bigr)\\[2.5ex]
\le C^* G(Z_1,\dots,Z_k) C
=\displaystyle C^*\begin{pmatrix}
                                     G(A_1,\dots,A_k) & 0\\
                                     0                           & G(B_1,\dots,B_k)
                                     \end{pmatrix} C\\[4ex]
\displaystyle=\begin{pmatrix}
                               X^* G(A_1,\dots,A_k)X + Y^* G(B_1,\dots,B_k) Y & 0\\
                               0                                                                             & 0
                               \end{pmatrix}
\end{array}
\]
from which we deduce that
\[
\begin{array}{l}
F\bigl(X^*A_1X+Y^*B_1Y,\dots,X^*A_kX+Y^*B_kY\Bigr)\\[2ex]
=G\bigl(X^*A_1X+Y^*B_1Y,\dots,X^*A_kX+Y^*B_kY\Bigr)+F(0,\dots,0)\\[2ex]
\le X^* G(A_1,\dots,A_k)X + Y^* G(B_1,\dots,B_k) Y+F(0,\dots,0)\\[2ex]
= X^* F(A_1,\dots,A_k)X + Y^* F(B_1,\dots,B_k) Y\\[1ex]
\hskip 8em -\,X^*F(0,\dots,0)X-Y^*F(0,\dots,0)Y+F(0,\dots,0).
\end{array}
\]
Since as above $ F(0,\dots,0)=c\cdot 1 $ for some real constant $ c $ we obtain
\[
\begin{array}{l}
-X^*F(0,\dots,0)X-Y^*F(0,\dots,0)Y+F(0,\dots,0)\\[1.5ex]
=-\,c(X^*X+Y^*Y)+c\cdot 1=0,
\end{array}
\]
and the statement of the theorem follows.
\end{proof}

We shall for $ k=1,2,\dots $ consider the convex domain
\[
\mathcal D_+^k = \{ (A_1,\dots,A_k)\mid A_1,\dots,A_k>0\}
\]
of positive definite and invertible operators acting on the Hilbert space $ \mathcal H. $

\begin{proposition}\label{proposition: transformer equality}
Let $ F $ be a regular map of $ \mathcal D_+^k $ into self-adjoint operators acting on $ \mathcal H. $ We assume that
\begin{enumerate}[(i)]

\item $ F $ is convex

\item $ F(tA_1,\dots,tA_k)=tF(A_1,\dots,A_k) $ \qquad $ t>0, $ \quad
$ (A_1,\dots,A_k)\in\mathcal D_+^k\,. $ 

\end{enumerate}
Then
\[
F(C^*A_1C,\dots,C^*A_kC)=C^*F(A_1,\dots,A_k)C
\]
for any invertible operator $ C $ on $ \mathcal H $ and $ (A_1,\dots,A_k)\in\mathcal D_+^k. $
\end{proposition}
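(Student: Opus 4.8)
The plan is to prove the two inequalities $F(C^*A_1C,\dots,C^*A_kC)\le C^*F(A_1,\dots,A_k)C$ and its reverse, the first being a Jensen type inequality adapted to the positive definite domain and the second obtained by inserting $C^{-1}$ into the first. I would begin by reducing to the case in which $C$ is an invertible strict contraction. By homogeneity the identity asserted for $C$ is equivalent to the identity asserted for $\lambda C$ for any scalar $\lambda>0$, since $F\big((\lambda C)^*A_i(\lambda C)\big)=\lambda^2F(C^*A_iC)$ and $(\lambda C)^*F(A_1,\dots,A_k)(\lambda C)=\lambda^2C^*F(A_1,\dots,A_k)C$; choosing $\lambda$ small enough we may therefore assume $\|C\|<1$. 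Then $C^*C<1$ and $CC^*<1$, so $T=(1-C^*C)^{1/2}$ and $S=(1-CC^*)^{1/2}$ are invertible and the operators $U,V$ from the proof of the preceding theorem are unitary on $\mathcal H\oplus\mathcal H$.

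Next I would carry out the block matrix computation of the theorem, but with the singular block $\mathrm{diag}(A_i,0)$ replaced by the positive definite block $\mathrm{diag}(A_i,B_i)$ with $B_i>0$ arbitrary. A direct calculation gives
\[
\tfrac12U^*\begin{pmatrix}A_i&0\\0&B_i\end{pmatrix}U+\tfrac12V^*\begin{pmatrix}A_i&0\\0&B_i\end{pmatrix}V=\begin{pmatrix}C^*A_iC+TB_iT&0\\0&SA_iS+CB_iC^*\end{pmatrix},
\]
and, because $C$ is invertible, all the compressed tuples occurring here lie in $\mathcal D_+^k$. Applying the block matrix rule for regular maps together with convexity and unitary invariance exactly as in the proof of the theorem, and then compressing to the upper left corner, one obtains
\[
F(C^*A_1C+TB_1T,\dots,C^*A_kC+TB_kT)\le C^*F(A_1,\dots,A_k)C+TF(B_1,\dots,B_k)T
\]
for all tuples $(A_1,\dots,A_k)$ and $(B_1,\dots,B_k)$ in $\mathcal D_+^k$.

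I would then let the auxiliary variables tend to zero. Taking $B_i=\varepsilon\cdot 1$ the last term equals $\varepsilon\,TF(1,\dots,1)T$ and tends to $0$ by homogeneity, while $C^*A_iC+\varepsilon T^2\to C^*A_iC$, a point of the open set $\mathcal D_+^k$; this yields $F(C^*A_1C,\dots,C^*A_kC)\le C^*F(A_1,\dots,A_k)C$ for every invertible strict contraction $C$. For the opposite inequality I would apply this one to the invertible strict contraction $\widehat C=\mu^{-1}C^{-1}$, where $\mu>\|C^{-1}\|$, and to the tuple $(C^*A_1C,\dots,C^*A_kC)$. Since $\widehat C^*(C^*A_iC)\widehat C=\mu^{-2}A_i$, homogeneity turns the left hand side into $\mu^{-2}F(A_1,\dots,A_k)$; multiplying by $\mu^2$ and conjugating by $C$ gives $C^*F(A_1,\dots,A_k)C\le F(C^*A_1C,\dots,C^*A_kC)$. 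The two inequalities combine to the desired equality for every invertible strict contraction, and the homogeneity reduction of the first paragraph then extends it to an arbitrary invertible $C$.

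The step I expect to be the real obstacle is the passage $B_i\to 0$: it uses that $F$ is norm continuous on the interior $\mathcal D_+^k$, which is genuine input rather than a formal consequence of convexity (in infinite dimensions a convex map need not even be locally bounded). This is the several variable analogue of the classical fact that an operator convex function is continuous, indeed analytic, in the interior of its interval; equivalently one may use it to extend $F$ to $\mathcal D^k$ with $F(0,\dots,0)=\lim_{t\downarrow0}F(t,\dots,t)=0$, after which the first part of the preceding theorem applies directly to produce the Jensen inequality and the remainder of the argument is unchanged.
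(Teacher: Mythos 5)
Your argument is correct and is essentially the paper's: the same $2\times 2$ unitary dilation with $U,V$ built from $C$, $T=(1-C^*C)^{1/2}$, $S=(1-CC^*)^{1/2}$, combined with convexity and the block-diagonal rule to get the Jensen inequality $F(C^*A_1C,\dots,C^*A_kC)\le C^*F(A_1,\dots,A_k)C$ for invertible $C$, followed by the identical sandwich with $C^{-1}$. The only difference is how the failure of $\mathcal D_+^k$ to contain zero blocks is handled --- the paper shifts to $F_\varepsilon(A_1,\dots,A_k)=F(A_1+\varepsilon,\dots,A_k+\varepsilon)-F(\varepsilon,\dots,\varepsilon)$ on $\mathcal D^k$ and invokes Theorem 2.1(i), while you keep a positive definite second block $B_i=\varepsilon\cdot 1$ and let it tend to zero --- and both versions require exactly the same (weak) continuity of $F$ along segments in the open domain, which you rightly flag and which follows from convexity composed with states.
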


\begin{proof}
Assume first that $ C $ is an invertible contraction on $ \mathcal H. $ Jensen's sub-homogeneous inequality is only available for regular mappings defined in $ \mathcal D^k. $ To $ \varepsilon>0 $ we therefore consider the mapping $ F_\varepsilon\colon\mathcal D^k\to B(\mathcal H) $ by setting
\[
F_\varepsilon(A_1,\dots,A_k)=F(\varepsilon+A_1,\dots,\varepsilon + A_k)-F(\varepsilon,\dots,\varepsilon).
\]
By unitary invariance of $ F $ we realise that $ F(\varepsilon, \dots,\varepsilon) $ is a multiple of the unity. Therefore, $ F_\varepsilon $ is regular and convex with $ F_\varepsilon(0,\dots,0)=0. $

We may thus use Jensen's sub-homogeneous inequality for regular mappings and obtain 
\[
F_\varepsilon(C^*A_1 C,\dots, C^* A_k C)\le C^* F_\varepsilon(A_1,\dots,A_k)C,
\]
where we now restrict $ (A_1,\dots,A_k) $ to the domain $ \mathcal D_+^k $ and rearrange the inequality to
\[
\begin{array}{l}
F(\varepsilon+C^*A_1C,\dots,\varepsilon + C^*A_k C)\\[1.5ex]
\le C^* F(A_1+\varepsilon,\dots,A_k+\varepsilon)C+F(\varepsilon,\dots,\varepsilon)-C^*F(\varepsilon,\dots,\varepsilon) C.
\end{array}
\]
Since $ F $ is positively homogeneous the term $ F(\varepsilon,\dots,\varepsilon)=\varepsilon F(1,\dots,1) $ is vanishing for $ \varepsilon\to 0 $ and we obtain
\begin{equation}\label{Jensen inequality in the corollary}
F(C^*A_1 C,\dots, C^* A_k C)\le C^* F(A_1,\dots,A_k)C
\end{equation}
for invertible $ C. $ Again using homogeneousness we obtain inequality (\ref{Jensen inequality in the corollary}) also for arbitrary invertible $ C. $ Then by repeated application of (\ref{Jensen inequality in the corollary}) we obtain
\[
F(A_1,\dots,A_k)\le {C^*}^{-1}F(C^*A_1 C,\dots,C^*A_k C) C^{-1}\le F(A_1,\dots,A_k)
\]
and the statement follows.
\end{proof}

\section{The perspective of a regular map}

\begin{definition}
Let $ F\colon\mathcal D_+^k\to B(\mathcal H) $ be a regular mapping. The perspective map $ \mathcal P_F $ is the mapping defined in the domain $ \mathcal D_+^{k+1} $ by setting
\[
\mathcal P_F(A_1,\dots,A_k,B)=B^{1/2} F(B^{-1/2}A_1B^{-1/2},\dots,B^{-1/2}A_kB^{-1/2}) B^{1/2}
\]
for positive invertible operators $ A_1,\dots,A_k $ and $ B $ acting on $ \mathcal H. $
\end{definition}

It is a small exercise to prove that the perspective $ \mathcal P_F $ is a regular mapping which is positively homogeneous in the sense that
\[
\mathcal P_F(tA_1,\dots,tA_k,tB)=t \mathcal P_F(A_1,\dots,A_k,B)
\]
for arbitrary $ (A_1,\dots,A_k,B)\in\mathcal D_+^{k+1} $ and real numbers $ t>0. $
The following theorem generalises a result of Effros \cite[Theorem 2.2]{kn:effros:2009:1} for functions of one variable.

\begin{theorem}\label{convexity of perspective} 
The perspective $ \mathcal P_F $ of a convex regular map $ F\colon\mathcal D_k^+\to B(\mathcal H) $ is convex.
\end{theorem}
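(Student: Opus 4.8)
The plan is to adapt to $ k $ variables the argument Effros used in the one-variable case, with part $ (ii) $ of the Jensen inequality for regular operator mappings proved above playing the role of the operator Jensen inequality in his proof. Fix $ \lambda\in(0,1), $ set $ \lambda_1=\lambda, $ $ \lambda_2=1-\lambda, $ and take two points $ (A_1^{(1)},\dots,A_k^{(1)},B^{(1)}) $ and $ (A_1^{(2)},\dots,A_k^{(2)},B^{(2)}) $ in $ \mathcal D_+^{k+1}; $ write $ B=\lambda_1 B^{(1)}+\lambda_2 B^{(2)} $ and $ A_i=\lambda_1 A_i^{(1)}+\lambda_2 A_i^{(2)}. $ The device is to introduce the invertible operators
\[
L_j=\lambda_j^{1/2}\,(B^{(j)})^{1/2}B^{-1/2},\qquad j=1,2,
\]
together with the positive definite $ k $-tuples $ P_i^{(j)}=(B^{(j)})^{-1/2}A_i^{(j)}(B^{(j)})^{-1/2}. $ A short computation gives the identities
\[
L_1^*L_1+L_2^*L_2=1,\qquad L_1^*P_i^{(1)}L_1+L_2^*P_i^{(2)}L_2=B^{-1/2}A_iB^{-1/2}\quad(i=1,\dots,k),
\]
and in particular all the operators appearing below stay in $ \mathcal D_+^k. $

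The core of the proof is the two-term inequality
\[
F\bigl(B^{-1/2}A_1B^{-1/2},\dots,B^{-1/2}A_kB^{-1/2}\bigr)\le L_1^*F\bigl(P_1^{(1)},\dots,P_k^{(1)}\bigr)L_1+L_2^*F\bigl(P_1^{(2)},\dots,P_k^{(2)}\bigr)L_2 .
\]
Since $ F $ is defined only on $ \mathcal D_+^k, $ whereas the Jensen inequality for regular operator mappings is stated on $ \mathcal D^k, $ I would first regularise exactly as in the proof of Proposition~\ref{proposition: transformer equality}: for $ \varepsilon>0 $ put $ F_\varepsilon(A_1,\dots,A_k)=F(\varepsilon+A_1,\dots,\varepsilon+A_k)-F(\varepsilon,\dots,\varepsilon), $ which by unitary invariance of $ F $ is a convex regular map of $ \mathcal D^k $ with $ F_\varepsilon(0,\dots,0)=0. $ Applying part $ (ii) $ of that theorem to $ F_\varepsilon $ with $ X=L_1, $ $ Y=L_2 $ and the $ k $-tuples $ (P_i^{(1)}) $ and $ (P_i^{(2)}) $ — the scalar $ F(\varepsilon,\dots,\varepsilon) $ dropping out on both sides because $ L_1^*L_1+L_2^*L_2=1 $ — and then letting $ \varepsilon\to 0 $ yields the displayed inequality; the limit uses only that a convex operator mapping is continuous in the interior of its domain, the same point already invoked in the proof of Proposition~\ref{proposition: transformer equality}.

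It remains to conjugate the displayed inequality by $ B^{1/2}. $ Because $ B^{1/2}L_j^*=\lambda_j^{1/2}(B^{(j)})^{1/2}, $ the right-hand side becomes
\[
\lambda_1\,(B^{(1)})^{1/2}F\bigl(P_1^{(1)},\dots,P_k^{(1)}\bigr)(B^{(1)})^{1/2}+\lambda_2\,(B^{(2)})^{1/2}F\bigl(P_1^{(2)},\dots,P_k^{(2)}\bigr)(B^{(2)})^{1/2},
\]
which is $ \lambda_1\mathcal P_F(A_1^{(1)},\dots,A_k^{(1)},B^{(1)})+\lambda_2\mathcal P_F(A_1^{(2)},\dots,A_k^{(2)},B^{(2)}), $ while the left-hand side is by definition $ \mathcal P_F(A_1,\dots,A_k,B) $; this is precisely the convexity of $ \mathcal P_F. $ The only step carrying real content is the two-term Jensen inequality, and the one genuinely delicate ingredient there is the $ \varepsilon\to 0 $ passage, forced solely by the fact that $ F $ lives on $ \mathcal D_+^k $ rather than on $ \mathcal D^k; $ the rest is the routine bookkeeping of the two identities for $ L_1,L_2 $ and of the conjugation by $ B^{1/2}. $
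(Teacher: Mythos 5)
Your proof is correct and follows essentially the same route as the paper: the operators $L_1,L_2$ are exactly the paper's $X=\lambda^{1/2}A_{k+1}^{1/2}C^{-1/2}$ and $Y=(1-\lambda)^{1/2}B_{k+1}^{1/2}C^{-1/2}$, and the key step is the same application of part $(ii)$ of the Jensen inequality for regular operator mappings. Your $\varepsilon$-regularisation is a welcome extra bit of care — the paper applies the Jensen theorem (stated on $\mathcal D^k$) directly to $F$ defined only on $\mathcal D_+^k$ without comment — and your observation that the scalar $F(\varepsilon,\dots,\varepsilon)$ cancels because $L_1^*L_1+L_2^*L_2=1$ correctly handles the absence of homogeneity.
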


\begin{proof}
Consider tuples $ (A_1,\dots,A_{k+1}) $ and $ (B_1,\dots,B_{k+1}) $ in $ \mathcal D_+^{k+1} $ and take  $ \lambda\in[0,1]. $ We define the operators
\[
\begin{array}{rl}
C&=\lambda A_{k+1} + (1-\lambda) B_{k+1}\\[1.5ex]
X&=\lambda^{1/2} A_{k+1}^{1/2} C^{-1/2}\\[1.5ex]
Y&=(1-\lambda)^{1/2} B_{k+1}^{1/2} C^{-1/2}
\end{array}
\]
and calculate that
\[
X^*X+Y^*Y=C^{-1/2}\lambda A_{k+1}C^{-1/2}+C^{-1/2} (1-\lambda) B_{k+1} C^{-1/2}=1
\]
and
\[
\begin{array}{l}
X^* A_{k+1}^{-1/2} A_i A_{k+1}^{-1/2} X+Y^* B_{k+1}^{-1/2} B_i B_{k+1}^{-1/2} Y\\[1.5ex]
= C^{-1/2}\lambda^{1/2}A_{k+1}^{1/2} A_{k+1}^{-1/2} A_i A_{k+1}^{-1/2} \lambda^{1/2}  A_{k+1}^{1/2}C^{-1/2}\\[1ex]
\hskip 4em+\,C^{-1/2}(1-\lambda)^{1/2} B_{k+1}^{1/2} B_{k+1}^{-1/2} B_i B_{k+1}^{-1/2} (1-\lambda)^{1/2}  B_{k+1}^{1/2}C^{-1/2}\\[1.5ex]
=\, C^{-1/2}(\lambda A_i + (1-\lambda) B_i) C^{-1/2}
\end{array}
\]
for $ i=1,\dots,k. $ We thus obtain
\[
\begin{array}{l}
\mathcal P_F(\lambda A_1+(1-\lambda) B_1,\dots,\lambda A_{k+1} + (1-\lambda) B_{k+1})\\[1.5ex]
=C^{1/2} F\bigl(C^{-1/2}(\lambda A_1+(1-\lambda) B_1)C^{-1/2},\dots,\\[1ex]
\hskip 6em C^{-1/2}(\lambda A_k+(1-\lambda) B_k) C^{-1/2}\bigr) C^{1/2}\\[1.5ex]
=C^{1/2} F\bigl(X^* A_{k+1}^{-1/2} A_1 A_{k+1}^{-1/2} X+Y^* B_{k+1}^{-1/2} B_1 B_{k+1}^{-1/2} Y,\dots,\\[1ex]
\hskip 6em X^* A_{k+1}^{-1/2} A_k A_{k+1}^{-1/2} X+Y^* B_{k+1}^{-1/2} B_k B_{k+1}^{-1/2} Y\bigr) C^{1/2}\\[2ex]
\le C^{1/2} \bigl(X^* F(A_{k+1}^{-1/2} A_1 A_{k+1}^{-1/2},\dots, A_{k+1}^{-1/2} A_k A_{k+1}^{-1/2}) X\\[1ex] 
\hskip 6em+\, Y^* F(B_{k+1}^{-1/2} B_1 B_{k+1}^{-1/2},\dots, B_{k+1}^{-1/2} B_k B_{k+1}^{-1/2}) Y\bigr) C^{1/2}\\[1.5ex]
= \lambda A_{k+1}^{1/2}  F(A_{k+1}^{-1/2} A_1 A_{k+1}^{-1/2},\dots, A_{k+1}^{-1/2} A_k A_{k+1}^{-1/2}) A_{k+1}^{1/2}\\[1ex]
\hskip 6em +\, (1-\lambda)B_{k+1}^{1/2} F(B_{k+1}^{-1/2} B_1 B_{k+1}^{-1/2},\dots, B_{k+1}^{-1/2} B_k B_{k+1}^{-1/2}) B_{k+1}^{1/2}\\[2ex]
=\lambda\mathcal P_F(A_1,\dots,A_{k+1})+(1-\lambda)\mathcal P_F(B_1,\dots,B_{k+1}),
\end{array}
\]
where we used Jensen's inequality for regular mappings.
\end{proof}

\begin{proposition}\label{proposition: homogeneous map as a perspective}
Let $ F\colon\mathcal D_+^{k+1}\to B(\mathcal H) $ be a convex and positively homogeneous regular mapping. Then $ F $ is the perspective of its restriction $ G $ to $ \mathcal D_+^k $ given by
\[
G(A_1,\dots,A_k)= F(A_1,\dots, A_k,1)
\]
for positive invertible operators $ A_1,\dots,A_k $ acting on $ \mathcal H. $
\end{proposition}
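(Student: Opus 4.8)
The plan is to take an arbitrary tuple $(A_1,\dots,A_k,B)\in\mathcal D_+^{k+1}$ and show directly that $F(A_1,\dots,A_k,B)$ equals the perspective $\mathcal P_G(A_1,\dots,A_k,B)=B^{1/2}G(B^{-1/2}A_1B^{-1/2},\dots,B^{-1/2}A_kB^{-1/2})B^{1/2}$. The key mechanism is Proposition~\ref{proposition: transformer equality}: since $F$ is convex, positively homogeneous, and regular on $\mathcal D_+^{k+1}$, it satisfies the transformer equality $F(C^*Z_1C,\dots,C^*Z_{k+1}C)=C^*F(Z_1,\dots,Z_{k+1})C$ for every invertible $C$.

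First I would apply the transformer equality with $C=B^{-1/2}$ (which is invertible on $\mathcal H$ since $B>0$) to the tuple $(A_1,\dots,A_k,B)$. This gives
\[
F(B^{-1/2}A_1B^{-1/2},\dots,B^{-1/2}A_kB^{-1/2},B^{-1/2}BB^{-1/2})=B^{-1/2}F(A_1,\dots,A_k,B)B^{-1/2}.
\]
The last entry on the left simplifies to the identity $1$, so the left-hand side is exactly $G(B^{-1/2}A_1B^{-1/2},\dots,B^{-1/2}A_kB^{-1/2})$ by definition of $G$. Multiplying both sides by $B^{1/2}$ on the left and right then yields
\[
B^{1/2}G(B^{-1/2}A_1B^{-1/2},\dots,B^{-1/2}A_kB^{-1/2})B^{1/2}=F(A_1,\dots,A_k,B),
\]
which is precisely the assertion that $F=\mathcal P_G$ on $\mathcal D_+^{k+1}$.

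I should also check that the hypotheses of Proposition~\ref{proposition: transformer equality} genuinely apply: $F$ is assumed convex, regular, and positively homogeneous on $\mathcal D_+^{k+1}$, so the proposition applies verbatim with $k$ replaced by $k+1$. It is worth remarking that $G$ is itself a regular mapping of $\mathcal D_+^k$ (it is the composition of $F$ with the inclusion $(A_1,\dots,A_k)\mapsto(A_1,\dots,A_k,1)$, which is equivariant under unitaries and respects the block-diagonal structure since the constant last coordinate $1$ decomposes as $p\cdot p+q\cdot q$), so that $\mathcal P_G$ is well-defined; but strictly speaking the statement only requires the identity of maps, which the computation above delivers. The main (and essentially only) obstacle is recognizing that Proposition~\ref{proposition: transformer equality} is the right tool and that the congruence by $B^{-1/2}$ collapses the last coordinate to the identity — once that is seen, the proof is a one-line substitution with no further analytic content.
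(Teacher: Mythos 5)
Your proof is correct and is essentially identical to the paper's: both apply Proposition~\ref{proposition: transformer equality} with $C=B^{-1/2}$ (the paper writes $A_{k+1}^{-1/2}$), observe that the last coordinate collapses to $1$, and rearrange. The extra remark about $G$ being regular is a harmless bonus not needed for the identity itself.
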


\begin{proof}
Since $ F $ is a convex and positively homogeneous regular mapping we may apply Proposition~\ref{proposition: transformer equality}. Then by setting $ C=A_{k+1}^{-1/2} $ we obtain
\[
\begin{array}{l}
A_{k+1}^{-1/2} F(A_1,\dots,A_k,A_{k+1}) A_{k+1}^{-1/2}\\[1.5ex]
\hskip 9em=F(A_{k+1}^{-1/2}A_1A_{k+1}^{-1/2},\dots,A_{k+1}^{-1/2}A_k A_{k+1}^{-1/2}, 1).
\end{array}
\]
By rearranging this equation we obtain
\[
F(A_1,\dots,A_k,A_{k+1})=A_{k+1}^{1/2} G(A_{k+1}^{-1/2}A_1A_{k+1}^{-1/2},\dots,A_{k+1}^{-1/2}A_k A_{k+1}^{-1/2}) A_{k+1}^{1/2}
\]
which is the statement to be proved.
\end{proof}

The result in the above proposition may be reformulated in the following way: The perspective $ \mathcal P_G $ of a convex regular mapping $ G\colon\mathcal D_+^k\to B(\mathcal H) $ is the unique extension of $ G $ to a positively homogeneous convex regular mapping $ F\colon\mathcal D_+^{k+1}\to B(\mathcal H). $

\section{The construction of geometric means}\label{construction of the geometric mean}

We construct a sequence of multivariate geometric means $ G_1, G_2, \dots $ by the following general procedure.

\begin{enumerate}[(i)]

\item We begin by setting $ G_1(A)=A $ for each positive definite invertible operator $ A. $

\item To each geometric mean $ G_k $ of $ k $ variables we associate an auxiliary mapping $ F_k\colon \mathcal D_+^k\to B(\mathcal H) $ such that
\begin{enumerate}[(a)]

\item $ F_k $ is a regular map,

\item $ F_k $ is concave,

\item $ F_k(t_1,\dots,t_k)=\big(t_1\cdots t_k\bigr)^{1/(k+1)} $ for positive numbers $ t_1,\dots, t_k. $

\end{enumerate}

\item We define the geometric mean $ G_{k+1}\colon\mathcal D_+^{k+1}\to B(\mathcal H) $ of $ k+1 $ variables as the perspective
\[
G_{k+1}(A_1,\dots,A_{k+1})=\mathcal P_{F_k}(A_1,\dots,A_{k+1})
\]
of the auxiliary map $ F_k\,. $

\end{enumerate}

Geometric means defined by this very general procedure are concave and positively homogeneous regular mappings by Theorem~\ref{convexity of perspective} and the preceding remarks. They also satisfy
\begin{equation}\label{geometric mean of commuting operators}
G_k(A_1,\dots,A_k)=\bigl(A_1\cdots A_k\bigr)^{1/k}
\end{equation}
for commuting operators. Indeed, since $ G_k $ is the perspective of $ F_{k-1} $ and this map satisfies $ (c) $ in condition $ (ii), $ we obtain $ G_k(t_1,\dots,t_k)=\bigl(t_1\cdots t_k\bigr)^{1/k} $ for positive numbers. Equality (\ref{geometric mean of commuting operators}) then follows since $ G_k $ is regular.
The geometric mean of two variables
\begin{equation}
G_2(A_1,A_2)=A_2^{1/2}\bigl(A_2^{-1/2}A_1A_2^{-1/2}\bigr)^{1/2}A_2^{1/2}
\end{equation}
coincides with the geometric mean of two variables $ A_1\#A_2 $ introduced by Pusz and Woronowicz. This is so since $ G_2 $ is the perspective of $ F_1 $ and $ F_1(A)=A^{1/2}. $ The last statement is obtained since $ F_1 $ is a regular mapping and satisfies $ F_1(t)=t^{1/2} $ for positive numbers by $ (c) $ in condition $ (ii). $ 

There are many ways to associate the auxiliary map $ F_k $ in the above procedure, so we should not in general expect much similarity between the geometric means for different number of variables. 

\subsection{The inductive geometric mean}

We define the auxiliary mapping $ F_k\colon \mathcal D_+^k\to B(\mathcal H) $ by setting
\[
F_k(A_1,\dots,A_k)=G_k(A_1,\dots,A_k)^{k/(k+1)}
\]
for $ k=1,2,\dots. $

\begin{theorem}\label{inductive geometric mean: defining properties}
The means $ G_k $ constructed in section \ref{construction of the geometric mean} then have the following properties:

\begin{enumerate}[(i)]

\item $ G_k\colon\mathcal D_+^k\to B(\mathcal H)_+ $ is a regular map for each $ k=1,2,\dots. $ 

\item $ G_k(tA_1,\dots, tA_k)=t G_k(A_1,\dots,A_k) $ for $ t>0, $ $ (A_1,\dots,A_k)\in\mathcal D_+^k $ and $ k=1,2,\dots. $

\item $ G_k\colon\mathcal D_+^k\to B(\mathcal H) $ is  concave for each $ k=1,2,\dots. $

\item $ G_{k+1}(A_1,\dots,A_k,1)=G_k(A_1,\dots,A_k)^{k/(k+1)} $ for $ (A_1,\dots,A_k)\in\mathcal D_+^k $ and $ k=1,2,\dots. $ 

\end{enumerate}

Any sequence of mappings $ \tilde G_k $ beginning with $ \tilde G_1(A)=A $ and satisfying the above conditions coincide with the means $ G_k $ for $ k=1,2,\dots. $

\end{theorem}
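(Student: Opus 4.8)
The existence half of the theorem is already established by the construction in Section~\ref{construction of the geometric mean} together with Theorem~\ref{convexity of perspective}: the $G_k$ are regular, positively homogeneous, concave (being perspectives of concave regular maps), and property $(iv)$ holds by the very definition of the auxiliary map $F_k$ as $G_k^{k/(k+1)}$ combined with the perspective identity $G_{k+1}(A_1,\dots,A_k,1)=\mathcal P_{F_k}(A_1,\dots,A_k,1)=F_k(A_1,\dots,A_k)$. So the only real content is uniqueness, and I would prove it by induction on $k$.

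\textbf{Base case and inductive step.} For $k=1$ we have $\tilde G_1(A)=A=G_1(A)$ by hypothesis. Assume $\tilde G_k=G_k$ on $\mathcal D_+^k$. The key structural observation is that $\tilde G_{k+1}$, by $(i)$--$(iii)$, is a concave (hence in particular convex in the sense needed — here I should be careful: Proposition~\ref{proposition: homogeneous map as a perspective} is stated for convex maps, but concavity gives the analogous statement by applying the result to $-\tilde G_{k+1}$, or one simply notes the perspective construction and Proposition~\ref{proposition: transformer equality} work verbatim for concave maps) and positively homogeneous regular mapping on $\mathcal D_+^{k+1}$. By Proposition~\ref{proposition: homogeneous map as a perspective} (in its concave form), $\tilde G_{k+1}$ is the perspective of its restriction $H(A_1,\dots,A_k)=\tilde G_{k+1}(A_1,\dots,A_k,1)$ to $\mathcal D_+^k$. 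But property $(iv)$ for the tilde-sequence says exactly $H(A_1,\dots,A_k)=\tilde G_k(A_1,\dots,A_k)^{k/(k+1)}=G_k(A_1,\dots,A_k)^{k/(k+1)}=F_k(A_1,\dots,A_k)$, using the inductive hypothesis in the middle step. Hence $\tilde G_{k+1}=\mathcal P_H=\mathcal P_{F_k}=G_{k+1}$, closing the induction.

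\textbf{Main obstacle.} The delicate point is justifying that Proposition~\ref{proposition: homogeneous map as a perspective} applies to $\tilde G_{k+1}$: that proposition and the Proposition~\ref{proposition: transformer equality} it rests on are phrased for \emph{convex} regular maps, whereas our means are \emph{concave}. I would address this head-on by observing that the transformer \emph{equality} in Proposition~\ref{proposition: transformer equality} is insensitive to the sign of convexity — one can run the argument with $-\tilde G_{k+1}$, which is convex, positively homogeneous, and regular, to get the transformer equality for $\tilde G_{k+1}$ itself, and then Proposition~\ref{proposition: homogeneous map as a perspective}'s proof (which only uses the transformer equality, not convexity per se) goes through unchanged. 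A secondary, purely bookkeeping point is checking that $F_k$ as defined here genuinely satisfies conditions $(a)$--$(c)$ of the general procedure — regularity is clear since the functional calculus power $t\mapsto t^{k/(k+1)}$ preserves regularity, $(c)$ is immediate from $G_k(t_1,\dots,t_k)=(t_1\cdots t_k)^{1/k}$, and concavity of $F_k$ follows from concavity of $G_k$ together with operator concavity of $t\mapsto t^{k/(k+1)}$ and its monotonicity (composition of an operator concave increasing function with a concave map is concave). Once these two points are in place the induction is routine.
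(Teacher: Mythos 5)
Your proposal is correct and follows essentially the same route as the paper: existence comes from the perspective construction together with Theorem~\ref{convexity of perspective}, and uniqueness is obtained by applying Proposition~\ref{proposition: homogeneous map as a perspective} to the concave, homogeneous, regular map $\tilde G_{k+1}$ to identify it as the perspective of its restriction, which property $(iv)$ and the induction hypothesis pin down as $F_k$. Your explicit remark that Propositions~\ref{proposition: transformer equality} and~\ref{proposition: homogeneous map as a perspective} transfer to concave maps via the sign flip $F\mapsto -F$ is a point the paper leaves implicit, and it is handled correctly.
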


\begin{proof} Each map $ G_k $ is for $ k=2,3,\dots $ the perspective of a regular map and this implies $ (i) $ and $ (ii). $ The assertion of concavity for $ G_1 $ is immediate. 
Suppose now $ G_k $ is concave for some $ k. $  Since the map $ t\to t^p $ is both operator monotone and operator concave for $ 0\le p\le 1, $ we realise that the auxiliary mapping
\[
F_k(A_1,\dots,A_k)=G_k(A_1,\dots,A_k)^{k/(k+1)}
\]
is concave, and since $ G_{k+1} $ is the perspective of $ F_k $ we then obtain by Theorem \ref{convexity of perspective} that also $ G_{k+1} $ is concave. Since the first map $ G_1 $ is concave we have thus proved by induction that $ G_k $ is concave for all $ k=1,2.\dots. $
The last property $ (iv) $ follows since $ G_{k+1} $ is the perspective  of $ G_k^{k/(k+1)}. $

Let finally $ \tilde G_k $ be a sequence of mappings satisfying $ (i) $ to $ (iv). $ Since each $ \tilde G_{k+1} $ is concave and homogeneous it follows by Proposition~\ref{proposition: homogeneous map as a perspective} that $ \tilde G_{k+1} $ is the perspective of its restriction  $ \tilde G_{k+1}(A_1,\dots,A_k,1). $ Because of $ (iv) $ we then realise that $ \tilde G_{k+1} $ is the perspective of the map
\[
\tilde F_k(A_1,\dots,A_k)=\tilde G_k(A_1,\dots,A_k)^{k/(k+1)}
\]
constructed from $ \tilde G_k. $ The $ \tilde G_k $ mappings are thus constructed by the same algorithm as the mappings $ G_k $ for every $ k\ge 2, $ and since $ \tilde G_1=G_1 $ they must all coincide. \end{proof}

In addition to the properties listed in the above theorem the means $ G_k $ enjoy a number of other properties that we list below.

\begin{theorem}\label{geometric mean: additional properties}
The means $ G_k $ constructed in section \ref{construction of the geometric mean} have the following additional properties:

\begin{enumerate}[(i)]

\item The means $ G_k $ are increasing in each variable for $ k=1,2\dots. $

\item The means $ G_k $ are congruence invariant. For any invertible operator $ C $ on $ \mathcal H $ the identity
\[
G_k(C^*A_1C,\dots,C^*A_kC)=C^* G_k(A_1,\dots,A_k) C
\]
holds for $ (A_1,\dots,A_k)\in\mathcal D_+^k $ and $ k=1,2,\dots. $

\item The means $ G_k $ are jointly homogeneous in the sense that
\[
G_k(t_1A_1,\dots,t_k A_k)=(t_1\cdots t_k)^{1/k} G_k(A_1,\dots,A_k)
\]
for scalars $ t_1,\dots,t_k>0, $  operators $ (A_1,\dots,A_k)\in\mathcal D_+^k $ and $ k=1,2,\dots. $

\item The means $ G_k $ are self-dual in the sense that
\[
G_k(A_1^{-1},\dots,A_k^{-1})=G_k(A_1,\dots,A_k)^{-1}
\]
for $ (A_1,\dots,A_k)\in\mathcal D_+^k $ and $ k=1,2,\dots. $

\item When restricted to positive definite matrices the determinant identity
\[
\det G_k(A_1,\dots,A_k)=(\det A_1\cdots\det A_k)^{1/k}
\]
holds for $ k=1,2\dots. $

\end{enumerate}

\end{theorem}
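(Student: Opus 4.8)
The plan is to establish the five properties by exploiting that each $G_k$ with $k\ge 2$ is a perspective $\mathcal P_{F_{k-1}}$ of the concave regular auxiliary map $F_{k-1}(A_1,\dots,A_{k-1})=G_{k-1}(A_1,\dots,A_{k-1})^{(k-1)/k}$, combined with the concavity and positive homogeneity already recorded in Theorem~\ref{inductive geometric mean: defining properties}, and to propagate everything by induction on $k$.

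First, for monotonicity $(i)$, I would argue by induction: the base $G_1(A)=A$ is clearly increasing, and for the inductive step I use that $t\mapsto t^{(k-1)/k}$ is operator monotone on $[0,\infty)$, so $F_{k-1}$ is increasing in each variable whenever $G_{k-1}$ is; then one checks that forming the perspective $\mathcal P_{F_{k-1}}(A_1,\dots,A_k)=A_k^{1/2}F_{k-1}(A_k^{-1/2}A_1A_k^{-1/2},\dots)A_k^{1/2}$ preserves monotonicity in the first $k-1$ variables directly, and monotonicity in the last variable $A_k$ should follow from concavity plus positive homogeneity of $G_k$ (a positively homogeneous concave map on the positive cone that is $\ge 0$ is automatically monotone in a scaling sense — more carefully, I would use the congruence/sub-homogeneity machinery of Proposition~\ref{proposition: transformer equality} applied with a contraction). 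For congruence invariance $(ii)$, each $G_k$ ($k\ge 2$) is convex (here concave, but the sign does not matter: Proposition~\ref{proposition: transformer equality} is stated for self-adjoint-valued regular maps that are convex, and applying it to $-G_k$ gives the transformer equality) and positively homogeneous, so Proposition~\ref{proposition: transformer equality} gives $G_k(C^*A_1C,\dots,C^*A_kC)=C^*G_k(A_1,\dots,A_k)C$ for every invertible $C$ immediately; the case $k=1$ is trivial.

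For joint homogeneity $(iii)$, I would again induct: $G_1$ is obvious, and for $G_{k}=\mathcal P_{F_{k-1}}$ I insert scalars $t_1,\dots,t_k$ and use congruence invariance $(ii)$ with $C=t_k^{1/2}$ to pull $t_k$ out as $t_k\,G_k$, then use the scalar identity $F_{k-1}(s_1 B_1,\dots,s_{k-1}B_{k-1})=(s_1\cdots s_{k-1})^{(k-1)/k}F_{k-1}(B_1,\dots,B_{k-1})$ — which itself follows by induction from joint homogeneity of $G_{k-1}$ and the exponent $(k-1)/k$ — to extract the remaining factor $(t_1\cdots t_{k-1}/t_k^{k-1})^{1/k}\cdot t_k=(t_1\cdots t_k)^{1/k}$. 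For self-duality $(iv)$, the cleanest route is induction through the perspective: one shows $\mathcal P_{F_{k-1}}(A_1^{-1},\dots,A_k^{-1})=\mathcal P_{F_{k-1}}(A_1,\dots,A_k)^{-1}$ by writing the left side as $A_k^{-1/2}F_{k-1}(A_k^{1/2}A_1^{-1}A_k^{1/2},\dots)A_k^{-1/2}$, noting $A_k^{1/2}A_i^{-1}A_k^{1/2}=(A_k^{-1/2}A_iA_k^{-1/2})^{-1}$, and invoking the inductive hypothesis that $G_{k-1}$ (hence $F_{k-1}=G_{k-1}^{(k-1)/k}$, since $(X^{-1})^p=(X^p)^{-1}$) is self-dual, after a congruence by $A_k^{1/2}$ to reconcile the two expressions. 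Finally, $(v)$ follows from $(iii)$ by a specialization-type argument: restricting to $n\times n$ positive definite matrices, congruence invariance $(ii)$ gives $\det G_k(C^*A_1C,\dots)=|\det C|^2\det G_k(A_1,\dots)$, and simultaneously diagonalizing is not available, but one can reduce to the commuting case where $G_k(A_1,\dots,A_k)=(A_1\cdots A_k)^{1/k}$ by writing any single $A_j$ via congruence as $C^*C$ and moving it inside, then iterating; the determinant is multiplicative and insensitive to the noncommutative corrections because each congruence step multiplies $\det G_k$ and $\prod\det A_i$ by the same factor.

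The main obstacle I anticipate is property $(i)$, monotonicity in the last perspective variable $A_k$: unlike the first $k-1$ slots, where monotonicity of $F_{k-1}$ transfers transparently through the sandwich $A_k^{1/2}(\cdot)A_k^{1/2}$, increasing $A_k$ changes both the outer factors and the arguments $A_k^{-1/2}A_iA_k^{-1/2}$ in opposing ways. The right tool is the concavity–plus–homogeneity package: a concave, positively homogeneous, nonnegative regular map $G$ satisfies $G(A+H)\ge G(A)$ for $H\ge 0$, which one proves via $G(A+H)\ge \lambda G(\lambda^{-1}A)+(1-\lambda)G((1-\lambda)^{-1}H)\to G(A)$ as $\lambda\to 1$ using homogeneity and $G\ge 0$; applying this to the variable $A_k$ (legitimate since $G_k$ is jointly a concave homogeneous function of all $k$ arguments) closes the gap. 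I would present monotonicity last among $(i)$–$(v)$ in the write-up, or alternatively as a standalone lemma, since it is logically the only non-formal point.
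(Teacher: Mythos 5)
Your arguments for (i)--(iv) are essentially the paper's. For (ii) you correctly reduce to Proposition~\ref{proposition: transformer equality} applied to $-G_k$; for (iii) and (iv) your inductions through the perspective are the ones in the paper (in (iii) note the displayed scalar identity should read $F_{k-1}(s_1B_1,\dots,s_{k-1}B_{k-1})=(s_1\cdots s_{k-1})^{1/k}F_{k-1}(B_1,\dots,B_{k-1})$, since $F_{k-1}=G_{k-1}^{(k-1)/k}$ and $G_{k-1}$ scales by $(s_1\cdots s_{k-1})^{1/(k-1)}$; your final arithmetic already uses the correct exponent). For (i) you eventually land on the right lemma --- concave $+$ positively homogeneous $+$ nonnegative implies monotone --- but the cleanest form is the joint one: apply it to the whole tuple, writing $\lambda B_m=\lambda A_m+(1-\lambda)C_m$ with $C_m=\lambda(1-\lambda)^{-1}(B_m-A_m)$, which makes the per-variable analysis of the perspective unnecessary. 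Two small repairs are needed in your version: the single-variable slice $A_k\mapsto G_k(A_1,\dots,A_k)$ is only homogeneous of degree $1/k$, so $\lambda G(\lambda^{-1}A_k)=\lambda^{(k-1)/k}G(A_k)$ (the limit $\lambda\to1$ still closes the argument); and $B_m-A_m$ need not be invertible, so one must first treat the case $B_m-A_m>0$ and then pass to the general case via $\mu A_m<A_m\le B_m$, $\mu\to1$.

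Property (v) is where there is a genuine gap. Simultaneous congruence $A_i\mapsto C^*A_iC$ can normalize at most one of the matrices to $1$ (or simultaneously diagonalize two of them); for $k\ge3$ there is no congruence reducing a general tuple to the commuting case, so the claimed reduction to $G_k(A_1,\dots,A_k)=(A_1\cdots A_k)^{1/k}$ does not exist, and ``iterating'' destroys the normalization achieved at the previous step. Your determinant bookkeeping is sound --- a congruence multiplies $\det G_k$ and $(\det A_1\cdots\det A_k)^{1/k}$ by the same factor $|\det C|^2$ --- but after the single congruence by $A_k^{-1/2}$ you must invoke the updating identity $G_k(X_1,\dots,X_{k-1},1)=G_{k-1}(X_1,\dots,X_{k-1})^{(k-1)/k}$ from Theorem~\ref{inductive geometric mean: defining properties}\,(iv) together with $\det(X^p)=(\det X)^p$ (valid for positive definite $X$ since $\det X=\exp(\tr\log X)$) to drop to $k-1$ variables and close the induction. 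That is exactly the paper's argument, and it is the missing step in yours.
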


\begin{proof}
The first property follows by the following standard argument for positive concave mappings. Consider  positive definite invertible operators $ A_m \le B_m $ for $ m=1,\dots,k. $ By first assuming that the difference $ B_m-A_m $ is invertible we may take $ \lambda\in (0,1) $ and write
\[
\lambda B_m=\lambda A_m+(1-\lambda) C_m \qquad m=1,\dots,k,
\]
where $ C_m=\lambda(1-\lambda)^{-1}(B_m-A_m) $ is positive definite and invertible.
By using concavity we then obtain
\[
\begin{array}{rl}
G_k(\lambda B_1,\dots,\lambda B_k)
&\ge \lambda G(A_1,\dots,A_k)+(1-\lambda) G_k(C_1,\dots,C_k)\\[1.5ex]
&\ge \lambda G(A_1,\dots,A_k).
\end{array}
\]
Letting $ \lambda\to 1 $ we obtain $ G_k(B_1,\dots,B_k)\ge G_k(A_1,\dots,A_k) $ by continuity. In the general case we choose $ 0<\mu<1 $ such that
\[
\mu A_m<A_m\le B_m\qquad m=1,\dots,k
\]
and obtain $ G_k(\mu A_1,\dots,\mu A_k)\le G_k(B_1,\dots,B_k). $ By letting $ \mu\to 1 $ we then obtain $ G_k(A_1,\dots, A_k)\le G_k(B_1,\dots,B_k) $ which shows $ (i). $

Since $ G_k $ is concave and homogeneous we obtain $ (ii) $ from Proposition~\ref{proposition: transformer equality}. 

Property $ (iii) $ is immediate for $ k=1 $ and $ k=2. $ Suppose the property is verified for $ k, $ then
\[
\begin{array}{l}
G_{k+1}(t_1 A_1,\dots,t_k A_k, t_{k+1}A_{k+1})\\[1.5ex]
=t_{k+1}A_{k+1}^{1/2}F_k\bigl(t_1t_{k+1}^{-1} A_{k+1}^{-1/2}A_1A_{k+1}^{-1/2},\dots,t_k t_{k+1}^{-1}A_{k+1}^{-1/2} A_k A_{k+1}^{-1/2}\bigr)A_{k+1}^{1/2}\\[1.5ex]
=t_{k+1}A_{k+1}^{1/2}G_k\bigl(t_1t_{k+1}^{-1} A_{k+1}^{-1/2}A_1A_{k+1}^{-1/2},\dots,t_k t_{k+1}^{-1}A_{k+1}^{-1/2} A_k A_{k+1}^{-1/2}\bigr)^{k/(k+1)} A_{k+1}^{1/2}\,.
\end{array}
\]
By using the induction assumption we obtain
\[
\begin{array}{l}
G_{k+1}(t_1 A_1,\dots,t_k A_k, t_{k+1}A_{k+1})\\[1.5ex]
=t_{k+1}(t_{k+1}^{-1} t_1^{1/k}\cdots t_k^{1/k})^{k/(k+1)} G_{k+1}(A_1,\dots,A_k,A_{k+1})\\[1.5ex]
=(t_1\cdots t_k t_{k+1})^{1/(k+1)} G_{k+1}(A_1,\dots,A_k,A_{k+1})
\end{array}
\]
which shows $ (iii). $

Property $ (iv) $ is immediate for $ k=1 $ and $ k=2. $ Suppose the property is verified for $ k, $ then
\[
\begin{array}{l}
G_{k+1}(A_1^{-1},\dots,A_k^{-1},A_{k+1}^{-1})\\[1.5ex]
=A_{k+1}^{-1/2} F_k\bigl(A_{k+1}^{1/2}A_1^{-1} A_{k+1}^{1/2},\dots, A_{k+1}^{1/2} A_k^{-1} A_{k+1}^{1/2}\bigr) A_{k+1}^{-1/2}\\[1.5ex]
=A_{k+1}^{-1/2} G_k\bigl(A_{k+1}^{1/2}A_1^{-1} A_{k+1}^{1/2},\dots, A_{k+1}^{1/2} A_k^{-1} A_{k+1}^{1/2}\bigr)^{k/(k+1)} A_{k+1}^{-1/2}\,.
\end{array}
\]
By using the induction assumption we obtain
\[
\begin{array}{l}
G_{k+1}(A_1^{-1},\dots,A_k^{-1},A_{k+1}^{-1})\\[1ex]
=A_{k+1}^{-1/2} G_k\bigl(A_{k+1}^{-1/2}A_1 A_{k+1}^{-1/2},\dots, A_{k+1}^{-1/2} A_k A_{k+1}^{-1/2}\bigr)^{-k/(k+1)} A_{k+1}^{-1/2}\\[1.5ex]
=\bigl(A_{k+1}^{1/2} G_k\bigl(A_{k+1}^{-1/2}A_1 A_{k+1}^{-1/2},\dots, A_{k+1}^{-1/2} A_k A_{k+1}^{-1/2}\bigr)^{k/(k+1)} A_{k+1}^{1/2}\bigr)^{-1}\\[1.5ex]
=G_{k+1}(A_1,\dots,A_k,A_{k+1})^{-1}
\end{array}
\]
which shows $ (iv). $

Notice that since $ \det A=\exp(\tr\log A) $ for positive definite $ A, $ we have $ \det A^p=(\det A)^p $ for all real exponents $ p. $ Property $ (v) $ is easy to calculate for $ k=1 $ and $ k=2. $ Suppose the property is verified for $ k. $ Since as above
\[
\begin{array}{l}
G_{k+1}(A_1,\dots,A_k,A_{k+1})\\[1.5ex]
=A_{k+1}^{1/2} G_k\bigl(A_{k+1}^{-1/2}A_1 A_{k+1}^{-1/2},\dots, A_{k+1}^{-1/2} A_k A_{k+1}^{-1/2}\bigr)^{k/(k+1)} A_{k+1}^{1/2}
\end{array}
\]
we obtain
\[
\begin{array}{l}
\det G_{k+1}(A_1,\dots,A_k,A_{k+1})\\[1.5ex]
=\det A_{k+1}\bigl(\det A_{k+1}^{-1}\det A_1\cdots \det A_{k+1}^{-1}\det A_k\bigr)^{1/(k+1)}\\[1.5ex]
=\bigl(\det A_1\cdots\det A_k\cdot\det A_{k+1}\bigr)^{1/k+1}
\end{array}
\]
which shows $ (v). $
\end{proof}

\begin{theorem}\label{HGA mean inequality}
The geometric means $ G_k $ are for $ k=1,2,\dots $ bounded between the symmetric harmonic and arithmetic means. That is,
\[
\frac{k}{A_1^{-1}+\cdots+A_k^{-1}}\le G_k(A_1,\dots,A_k)\le\frac{A_1+\cdots+A_k}{k}
\]
for arbitrary $ (A_1,\dots,A_k)\in\mathcal D_+^k $ and $ k=1,2,\dots. $
\end{theorem}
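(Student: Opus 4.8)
The plan is to prove both inequalities by induction on $k$, exploiting the recursive structure $G_{k+1}=\mathcal P_{F_k}$ with $F_k=G_k^{k/(k+1)}$, together with the congruence invariance from Theorem~\ref{geometric mean: additional properties}(ii). For $k=1$ both bounds are equalities, so the base case is trivial. For the inductive step I would first reduce, using congruence invariance with $C=A_{k+1}^{-1/2}$, to the case $A_{k+1}=1$: writing $B_i=A_{k+1}^{-1/2}A_iA_{k+1}^{-1/2}$ we have $G_{k+1}(A_1,\dots,A_{k+1})=A_{k+1}^{1/2}G_{k+1}(B_1,\dots,B_k,1)A_{k+1}^{1/2}$, and likewise the harmonic and arithmetic means transform covariantly under $X\mapsto A_{k+1}^{1/2}XA_{k+1}^{1/2}$; so it suffices to bound $G_{k+1}(B_1,\dots,B_k,1)=G_k(B_1,\dots,B_k)^{k/(k+1)}$, where the last equality is property (iv) of Theorem~\ref{inductive geometric mean: defining properties}.

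For the upper bound, I would combine the induction hypothesis $G_k(B_1,\dots,B_k)\le (B_1+\cdots+B_k)/k$ with operator monotonicity of $t\mapsto t^{k/(k+1)}$ to get $G_{k+1}(B_1,\dots,B_k,1)\le\bigl((B_1+\cdots+B_k)/k\bigr)^{k/(k+1)}$, and then apply operator concavity of the same power function together with $1=1^{k/(k+1)}$ in the form of the weighted operator-concavity inequality
\[
\Bigl(\tfrac{k}{k+1}\cdot\tfrac{B_1+\cdots+B_k}{k}+\tfrac{1}{k+1}\cdot 1\Bigr)^{k/(k+1)}\ge \tfrac{k}{k+1}\Bigl(\tfrac{B_1+\cdots+B_k}{k}\Bigr)^{k/(k+1)}+\tfrac{1}{k+1}\cdot 1,
\]
which, after undoing the congruence, yields $G_{k+1}(A_1,\dots,A_{k+1})\le (A_1+\cdots+A_{k+1})/(k+1)$. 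For the lower bound I would run the dual argument: apply the upper bound already proven to $A_1^{-1},\dots,A_{k+1}^{-1}$, invoke self-duality $G_{k+1}(A_1^{-1},\dots,A_{k+1}^{-1})=G_{k+1}(A_1,\dots,A_{k+1})^{-1}$ from Theorem~\ref{geometric mean: additional properties}(iv), and use that $X\mapsto X^{-1}$ is order-reversing to turn the arithmetic-mean upper bound on the inverses into the harmonic-mean lower bound on the original tuple.

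The main obstacle is the upper-bound induction step: passing from a bound on $G_k$ to a bound on $G_{k+1}=G_k^{k/(k+1)}$ composed with a perspective requires the $\le$ to survive both the fractional power (handled by operator monotonicity of $t^{p}$, $0\le p\le 1$) and the "insertion of the $1$ in the last slot'' (handled by operator concavity of $t^{p}$, i.e. the fact that $((1-\lambda)S+\lambda)^{p}\ge(1-\lambda)S^{p}+\lambda$ for positive $S$). Both facts are standard Löwner theory and are already cited in the proof of Theorem~\ref{inductive geometric mean: defining properties}, so the argument is clean; one only has to be careful that the congruence reduction to $A_{k+1}=1$ is legitimate, which it is by Theorem~\ref{geometric mean: additional properties}(ii) and the obvious covariance of the arithmetic and harmonic means under $X\mapsto C^*XC$.
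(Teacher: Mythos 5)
Your overall strategy (induction on $k$, reduction to $A_{k+1}=1$ via congruence invariance, operator monotonicity of $t\mapsto t^{k/(k+1)}$, and the dual argument for the harmonic bound via self-duality) is essentially the paper's proof. However, the key step of your upper-bound induction is justified by the wrong inequality. After your reduction you must show $S^{k/(k+1)}\le\tfrac{k}{k+1}S+\tfrac{1}{k+1}$ with $S=(B_1+\cdots+B_k)/k$, and for this you invoke the weighted Jensen form of operator concavity,
\[
\Bigl(\tfrac{k}{k+1}S+\tfrac{1}{k+1}\Bigr)^{k/(k+1)}\ \ge\ \tfrac{k}{k+1}S^{k/(k+1)}+\tfrac{1}{k+1}.
\]
This inequality is true but does not yield what you need: writing $A=\tfrac{k}{k+1}S+\tfrac{1}{k+1}$, it only gives $S^{k/(k+1)}\le\tfrac{k+1}{k}A^{k/(k+1)}-\tfrac{1}{k}$, and deducing $S^{k/(k+1)}\le A$ from that is equivalent to $A^{k/(k+1)}\le\tfrac{k}{k+1}A+\tfrac{1}{k+1}$ --- the very statement you are trying to prove, with $A$ in place of $S$. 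As written, the step is circular.

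The fact actually needed is the tangent-line (supporting-line) inequality at $1$ for the concave function $t\mapsto t^{p}$ with $p=k/(k+1)$, namely
\[
X^{k/(k+1)}\ \le\ 1+\tfrac{k}{k+1}(X-1)
\]
for positive definite $X$. This is a scalar inequality transferred by the functional calculus (only one operator appears, so no operator concavity is required), and it is exactly what the paper uses: it applies it to $X=G_k(A_1,\dots,A_k)$, inserts the induction hypothesis into the resulting affine majorant to get $F_k\le\frac{A_1+\cdots+A_k+1}{k+1}$, and then takes perspectives. With this substitution your argument closes; the remainder of your proposal --- in particular obtaining the harmonic lower bound by applying the arithmetic upper bound to the inverses, invoking self-duality, and using that inversion is order-reversing --- coincides with the paper's argument.
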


\begin{proof}
The upper bound holds with equality for $ k=1. $ Suppose that we have verified the inequality for $ k. $ Since by classical analysis
\[
X^{k/(k+1)}\le 1+\frac{k}{k+1} (X-1)
\]
for positive definite $ X, $ we obtain
\[
\begin{array}{l}\displaystyle
F_k(A_1,\dots,A_k)=G_k(A_1,\dots,A_k)^{k/(k+1)}\le 1+\frac{k}{k+1}(G_k(A_1,\dots,A_k)-1)\\[2ex]
\le\displaystyle 1+\frac{k}{k+1}\Bigl(\frac{A_1+\cdots+A_k}{k} -1\Bigr)
=\frac{A_1+\cdots+A_k+1}{k+1}\,.
\end{array}
\]
By taking perspectives we now obtain
\[
\begin{array}{l}
G_{k+1}(A_1,\dots,A_k,B)=\mathcal P_{F_k}(A_1,\dots,A_k,B)\\[2ex]
=B^{1/2}F_k(B^{-1/2}A_1 B^{-1/2},\dots,B^{-1/2}A_k B^{-1/2})B^{1/2}\\[2ex]
\le\displaystyle B^{1/2}\frac{B^{-1/2}A_1 B^{-1/2}+\cdots+B^{-1/2}A_k B^{-1/2} + 1}{k+1} B^{1/2}
=\frac{A_1+\cdots A_k+B}{k+1}
\end{array}
\]
which proves the upper bound by induction. We next use the upper bound to obtain
\[
G_k(A_1^{-1},\dots,A_k^{-1})\le\frac{A_1^{-1}+\cdots+A_k^{-1}}{k}\,.
\]
By inversion we then obtain
\[
\frac{k}{A_1^{-1}+\cdots+A_k^{-1}}\le G_k(A_1^{-1},\dots,A_k^{-1})^{-1}=G_k(A_1,\dots,A_k),
\]
where we in the last equation used self-duality of the geometric mean, cf. property $ (iv) $ in Theorem~\ref{geometric mean: additional properties}.
\end{proof}

The means studied in this section are known in the literature as the inductive means of Sagae and Tanabe \cite{kn:sagae:1994}. By considering the power mean
\[
A\#_tB=B^{1/2}\bigl(A^{-1/2}BA^{-1/2}\bigr)^t B^{1/2}\qquad 0\le t\le 1
\]
they established the recursive relation by setting
\[
G_{k+1}(A_1,\dots,A_{k+1})=G_k(A_1,\dots,A_k)\#_{k/(k+1)} A_{k+1}.
\]
The authors did not study the general properties of these means but established the harmonic-geometric-arithmetic mean inequality of Theorem~\ref{HGA mean inequality}. It is possible to prove the crucial concavity property $ (iii) $ in Theorem~\ref{inductive geometric mean: defining properties} by induction. It can be done without the general theory of perspectives of regular operator mappings, and it only requires the properties of an operator mean of two variables as studied by Kubo and Ando \cite{kn:kubo:1980}. However, this is a special situation that only applies to the inductive means.

\subsection{Variant geometric means}

The inductive geometric means are uniquely specified within the general framework discussed in this paper by choosing the updating condition (\ref{updating condition}), cf. property $ (iv) $ in Theorem~\ref{inductive geometric mean: defining properties}. We may instead construct geometric means satisfying updating condition (\ref{variant updating condition}) by choosing the auxiliary map
\[
F_k(A_1,\dots,A_k)=G_k(A_1^{k/(k+1)},\dots,A_k^{k/(k+1)})
\]
for $ k=1,2\dots. $   It is a small exercise to realise that these means satisfy all of the properties listed in Theorem~\ref{inductive geometric mean: defining properties}, Theorem \ref{geometric mean: additional properties}, and Theorem~\ref{HGA mean inequality} with the only exception that condition $ (iv) $ in Theorem~\ref{inductive geometric mean: defining properties} is replaced by updating condition  (\ref{variant updating condition}). Concavity of these means cannot be reduced to concavity of operator means of two variables but relies on the general theory of regular operator mappings and Theorem \ref{convexity of perspective}.

\subsection{The Karcher means}

The Karcher mean $ \Lambda_k(A_1,\dots,A_k) $ of $ k $ positive definite invertible operator variables is defined as the unique positive definite solution to the equation
\begin{equation}\label{definition of Karcher mean}
\sum_{i=1}^k \log\bigl(X^{1/2} A_i X^{1/2}\bigr)=0,
\end{equation}
and it enjoys all of the attractive properties of an operator mean listed by Ando, Li, and Mathias, cf. \cite{kn:Lawson:2014}. The defining equation (\ref{definition of Karcher mean}) immediately implies that the Karcher mean $ \Lambda_k\colon\mathcal D_+^k\to B(\mathcal H) $ is a regular operator mapping, and it may therefore be understood within the general framework discussed in this paper by choosing the auxiliary map
\[
F_k(A_1,\dots,A_k)=\Lambda_{k+1}(A_1,\dots,A_k,1).
\]
The problem, however, is that we do not have any explicit expression of $ F_k $ in terms of $ \Lambda_k\,. $

{\small
 

\vfill

\noindent Frank Hansen: Institute for Excellence in Higher Education, Tohoku University, Japan.\\
Email: frank.hansen@m.tohoku.ac.jp.
      }

\end{document}